\documentclass[11pt]{article}

\usepackage{amsfonts,amsmath,amssymb}
\usepackage{amsthm}
\usepackage{graphicx}

\newtheorem{theorem}{Theorem}[section]
\newtheorem{proposition}[theorem]{Proposition}

\newtheorem{conjecture}[theorem]{Conjecture}

\oddsidemargin=0.15in \evensidemargin=0.15in \topmargin=-.5in
\textheight=9in \textwidth=6.25in

\newcommand{\beq}[1]{\begin{equation}\label{#1}}
\newcommand{\enq}[0]{\end{equation}}

\newcommand{\f}[0]{{\cal F}}
\newcommand{\G}[0]{{\cal G}}
\newcommand{\p}[0]{{\cal P}}

\newcommand{\pn}[0]{{\cal P}([n])}

\usepackage[usenames]{color}


\begin{document}

\title{A Note on Large H-Intersecting Families}

\author{
Nathan Keller\thanks{Department of Mathematics, Bar Ilan University, Ramat Gan, Israel.
{\tt nathan.keller27@gmail.com}. Research supported by the Israel Science Foundation (grants no.
402/13 and 1612/17), the Binational US-Israel Science Foundation (grant no. 2014290), and by the Alon Fellowship.}
\mbox{ } and Noam Lifshitz\thanks{Department of Mathematics, Bar Ilan University, Ramat Gan, Israel.
{\tt noamlifshitz@gmail.com}.}
}

\maketitle

\begin{abstract}
A family $\f$ of graphs on a fixed set of $n$ vertices is called \emph{triangle-intersecting} if for any $G_1,G_2 \in \f$, the intersection $G_1 \cap G_2$ contains a triangle. More generally, for a fixed graph $H$, a family $\f$ is $H$-intersecting if the intersection of any two graphs in $\f$ contains a sub-graph isomorphic to $H$.

In~\cite{EFF12}, Ellis, Filmus and Friedgut proved a 36-year old conjecture of Simonovits and S\'{o}s stating that the maximal size of a triangle-intersecting family is $(1/8)2^{n(n-1)/2}$. Furthermore, they proved a $p$-biased generalization, stating that for any $p \leq 1/2$, we have
$\mu_{p}\left(\f\right)\le p^{3}$, where $\mu_{p}\left(\f\right)$ is the probability that the random graph $G\left(n,p\right)$ belongs to $\f$.

In the same paper, Ellis et al. conjectured that the assertion of their biased theorem holds also for $1/2 < p \le 3/4$, and more generally, that for any non-$t$-colorable graph $H$ and any $H$-intersecting family $\f$, we have $\mu_{p}\left(\f\right)\le p^{t(t+1)/2}$ for all $p \leq (2t-1)/(2t)$.

In this note we construct, for any fixed $H$ and any $p>1/2$, an $H$-intersecting family $\f$ of graphs such that $\mu_{p}\left(\f\right)\ge 1-e^{-n^{2}/C}$, where $C$ depends only on $H$ and $p$, thus disproving both conjectures.
\end{abstract}

\section{Introduction}

Denote $[n]=\{1,2,\ldots,n\}$. Throughout the paper, $\G_n$ denotes the family of all graphs on a fixed set of $n$ vertices.

A family $\f\subset\p([n])$ is said to be \emph{intersecting} if for any $A,B\in\f$, $A\cap B\neq\emptyset$. The classical Erd\H{o}s-Ko-Rado (EKR) theorem~\cite{EKR} determines the maximal size of an intersecting family of $k$-element subsets of $[n]$.
\begin{theorem}[Erd\H{o}s, Ko, and Rado, 1961]
\label{Thm:EKR} Let $k<n/2$,
and let $\f\subset[n]^{(k)}$ be an intersecting family. Then $|\f|\leq{n-1 \choose k-1}$.
Equality holds if and only if $\f = \{A \in [n]^{(k)}: x \in A\}$, for some $x \in [n]$.
\end{theorem}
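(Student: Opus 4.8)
\bigskip
\noindent\textbf{Proof proposal.}
I would prove the size bound by Katona's circular-permutation (``cycle'') argument and then obtain the equality case from the tightness in that argument. Place the ground set $[n]$ around a circle; call a $k$-subset an \emph{arc} if its elements occupy $k$ cyclically consecutive positions, so each cyclic placement has exactly $n$ arcs. The combinatorial core is the following \emph{arc lemma}: because $k<n/2$, two disjoint $k$-arcs exist, and the arcs that meet a fixed arc $A$ are exactly $A$ itself together with, for each $i=1,\dots,k-1$, the arc $L_i$ whose last point is the $i$-th point of $A$ and the arc $R_i$ whose first point is the $(i{+}1)$-st point of $A$; for each $i$ the arcs $L_i$ and $R_i$ are disjoint (this is exactly where $k<n/2$ is used), so an intersecting family contains at most one of $\{L_i,R_i\}$ for each $i$ and hence at most $k$ arcs of any single cyclic placement.

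With the arc lemma in hand I would double count the pairs $(A,\sigma)$ where $A\in\f$ and $A$ is an arc of the cyclic placement $\sigma$. Fixing $\sigma$ and summing over $A$ gives at most $k$ per placement, hence at most $k\cdot n!$ in total. Fixing $A\in\f$ and summing over $\sigma$ gives exactly $n\cdot k!\,(n-k)!$ placements in which $A$ is an arc. Equating the two counts yields $|\f|\cdot n\,k!\,(n-k)!\le k\cdot n!$, i.e.\ $|\f|\le\binom{n-1}{k-1}$, the asserted bound.

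For the uniqueness statement, suppose $|\f|=\binom{n-1}{k-1}$. Then the double count is tight, so for \emph{every} cyclic placement exactly $k$ members of $\f$ are arcs of it, and equality $k$ in the arc lemma is attained only when these are $k$ consecutive arcs $[\,j\,],[\,j{+}1\,],\dots,[\,j{+}k{-}1\,]$ — whose intersection is the single point $j{+}k{-}1$. Thus in every cyclic placement the arcs of $\f$ have a unique common element. The step I expect to be the real obstacle is to upgrade this to a \emph{global} common element $x$ with $\f\subseteq\{A:x\in A\}$ (after which the size count forces $\f=\{A\in[n]^{(k)}:x\in A\}$): one must argue that if no single element lay in all of $\f$, then two appropriately chosen members of $\f$ could be realised simultaneously as arcs of one cyclic placement in a configuration that is not a consecutive block, contradicting the structure just derived. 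If one prefers to sidestep this bookkeeping, the cleanest alternatives are (i) to invoke the Hilton--Milner theorem, whose bound for non-star intersecting families is strictly below $\binom{n-1}{k-1}$ when $k<n/2$, or (ii) to recast the whole argument via the compression operators $S_{ij}$ ($i<j$), which preserve $|\f|$, $k$-uniformity and the intersecting property, reduce $\f$ to a left-compressed family, and then, by induction on $n$ splitting according to whether the element $n$ is present (using that the link of $n$ is again intersecting for compressed families), deliver both the bound and the characterization of the extremal families at once.
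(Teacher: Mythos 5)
The paper does not prove this statement at all: Theorem~\ref{Thm:EKR} is quoted as classical background, with the proof delegated to the citation~\cite{EKR}, so there is no in-paper argument to compare yours against. Judged on its own terms, your proposal is the standard Katona cycle proof and the size bound is established correctly and completely: the arc lemma is right (the pairing of $L_i$ with $R_i$ and their disjointness is exactly where $k<n/2$ enters), and the double count $|\f|\cdot n\,k!\,(n-k)!\le k\cdot n!$ gives $|\f|\le\binom{n-1}{k-1}$.

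The equality characterization, however, is not yet a proof as written, and you correctly identify where it breaks. Two separate points need care. First, the claim that an intersecting family of exactly $k$ arcs must consist of $k$ consecutive arcs through a common point is itself a lemma requiring an argument (one must rule out ``mixed'' choices such as $L_i$ together with $R_{i'}$ for $i\le i'$, including possible wrap-around intersections when $n$ is only slightly larger than $2k$); it is true for $k<n/2$ but should be proved, not asserted. Second, and more seriously, the local-to-global step --- passing from ``every cyclic placement has a common element for its arcs in $\f$'' to a single $x$ contained in every member of $\f$ --- is genuinely nontrivial and is left as a gesture. Your proposed repairs are both sound: the Hilton--Milner theorem immediately yields that any non-star intersecting family has size strictly below $\binom{n-1}{k-1}$ for $k<n/2$, and the compression-plus-induction route delivers the bound and the extremal characterization simultaneously. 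Either of these would complete the proof; as submitted, the uniqueness half remains a sketch with an honestly acknowledged hole.
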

The EKR theorem is the cornerstone of an entire subfield of extremal combinatorics called `intersection problems for finite sets', which studies how large can a family of sets be, given some intersection constraints on its elements. See~\cite{FT16} for a recent survey of the topic.

\medskip

\noindent Along with intersection problems on families of $k$-element sets (called $k$-uniform families), it is quite common to consider $p$-\emph{biased} versions of the problems, in which one wants to find the maximal $p$-biased measure of a family $\f\subseteq\pn$, defined by $\mu_{p}(\f):=\sum_{S\in\f}p^{|S|}(1-p)^{n-|S|}$, given that $\f$ satisfies some intersection constraints. In this setting, the basic result is the $p$-biased version of the EKR Theorem, proved by Ahlswede and Katona~\cite{Ahlswede Katona} in 1977, which asserts that for any intersecting $\f$ and any $p\le\frac{1}{2}$, we have $\mu_{p}\left(\f\right)\le p$. Biased intersection theorems usually follow from the corresponding $k$-uniform results (see~\cite{Dinur Safra}). In the other direction,  in some cases $k$-uniform results were deduced from their $p$-biased analogues (see, e.g.,~\cite{friedgut2008measure,lee2016towards}). For a semi-random sample of recent $p$-biased intersection results, see~\cite{EKL16+,frankl2014erdHos,lee2016towards} and the references therein.

\medskip

One of the best-known intersection problems was determining the maximal size of a triangle-intersecting family of graphs. In 1976, Simonovits and S\'{o}s conjectured that the maximum is attained by the family of all graphs that contain a fixed triangle, and thus, $|\f| \leq (1/8)2^{n(n-1)/2}$ for any triangle-intersecting $\f \subset \G_n$. The first major step toward resolution of the conjecture was made in 1986, when Chung et al.~\cite{CGFS86} showed that $|\f| \leq (1/4)2^{n(n-1)/2}$, using entropy methods. It took 26 more years until the Simonovits-S\'{o}s conjecture was proved in a beautiful paper of Ellis, Filmus and Friedgut~\cite{EFF12} using spectral methods and Fourier analysis. Actually, Ellis et al. proved the following more general biased version of the conjecture:
\begin{theorem}[Ellis, Filmus, and Friedgut, 2012]\label{Thm:EFF}
Let $\f$ be a triangle-intersecting family of graphs. Then for any $p \leq 1/2$, we have $\mu_p(\f) \leq p^3$.
\end{theorem}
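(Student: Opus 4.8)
The plan is to recast the statement as a bound on the $\mu_p$-measure of an independent set in a \emph{conflict graph} and apply a weighted eigenvalue (Hoffman-type) bound. Let $X_n$ be the graph on vertex set $\G_n$ in which $G_1\sim G_2$ iff $G_1\cap G_2$ is triangle-free; then $\f$ is triangle-intersecting precisely when it is an independent set in $X_n$, and the goal becomes $\mu_p(\f)\le p^3$ for $p\le 1/2$.

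The engine is the following weighted ratio bound. Suppose $A$ is a real matrix indexed by $\G_n$, self-adjoint with respect to $\langle f,g\rangle_p=\sum_G \mu_p(G)f(G)g(G)$, with zero diagonal and with $A(G_1,G_2)=0$ whenever $G_1\cap G_2$ contains a triangle; suppose further that $\mathbf 1$ is an eigenvector with eigenvalue $d>0$ and every other eigenvalue is at least $\lambda$. Then for an independent set $\f$ of $X_n$ one has $\langle \mathbf 1_\f,A\mathbf 1_\f\rangle_p=0$ (the only entries between members of $\f$ that could be nonzero are diagonal ones), and writing $\mathbf 1_\f=\mu_p(\f)\mathbf 1+g$ with $g\perp\mathbf 1$ yields $0\ge d\,\mu_p(\f)^2+\lambda(\mu_p(\f)-\mu_p(\f)^2)$, hence $\mu_p(\f)\le \tfrac{-\lambda}{d-\lambda}$. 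So it suffices to construct, for each $p\le 1/2$, such a \emph{pseudo-adjacency matrix} $A=A_p$ with $\tfrac{-\lambda}{d-\lambda}=p^3$; normalising $d=1$, we need the least eigenvalue of $A_p$ to be (at least) $-\tfrac{p^3}{1-p^3}$.

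Constructing $A_p$ with the right support and spectrum is the heart of the matter and the step I expect to be the real obstacle. The support constraint forces $A_p$ to be ``blind'' to every triangle, so the natural approach is to assemble it from local contributions attached to the triangles (or edges) of $K_n$, using the $S_n$-symmetry of the problem to organise the spectrum via the representation theory of $S_n$ / Fourier analysis on the cube $\{0,1\}^{\binom n2}$ and reduce the eigenvalue estimate to a bounded-size computation. The case $n=3$ is the guiding example: the only nontrivial triangle-intersecting family there is $\{K_3\}$, of measure exactly $p^3$, which both shows the bound is sharp and fixes the target eigenvalue. The difficulty is that for general $n$ the triangles overlap in edges, so $A_p$ cannot simply be a tensor power of the three-vertex matrix; one must show the local pieces can be glued together without spoiling the spectral gap.

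The hypothesis $p\le 1/2$ should enter exactly when verifying the eigenvalue estimate — the positivity/inequality ensuring $\lambda\ge -\tfrac{p^3}{1-p^3}$ is expected to hold precisely for $p\le 1/2$ and to fail beyond it, consistently with the main point of this note that the regime $p>1/2$ behaves entirely differently. With $A_p$ and its least eigenvalue in hand the ratio bound finishes the proof, and examining its equality case additionally gives that for $p<1/2$ the unique extremal family is $\{G\in\G_n: G\supseteq\Delta\}$ for a fixed triangle $\Delta$.
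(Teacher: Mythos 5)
This theorem is not proved in the note you are reading: it is quoted verbatim from Ellis, Filmus and Friedgut \cite{EFF12}, so there is no internal proof to compare against. Your outline does correctly identify the strategy of the actual proof in \cite{EFF12} --- reformulating triangle-intersection as independence in a conflict graph, applying a weighted Hoffman-type ratio bound with respect to the inner product $\langle f,g\rangle_p=\sum_G \mu_p(G)f(G)g(G)$, and organising the spectrum via Fourier analysis on $\{0,1\}^{\binom{n}{2}}$ --- and the ratio-bound derivation you give (from $\langle \mathbf 1_\f,A\mathbf 1_\f\rangle_p=0$ to $\mu_p(\f)\le -\lambda/(d-\lambda)$) is correct as stated.

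However, what you have written is a plan, not a proof, and you say so yourself: the construction of the pseudo-adjacency matrix $A_p$ with the required support (vanishing off pairs whose intersection is triangle-free) and the required spectral gap ($\lambda\ge -p^3/(1-p^3)$ after normalising $d=1$) is the entire content of the Ellis--Filmus--Friedgut paper, and it is exactly the step you leave open. It is not a routine gluing problem: the matrix in \cite{EFF12} is built from a carefully chosen family of operators associated with cuts/bipartite subgraphs (their argument in fact proves the stronger statement for odd-cycle-intersecting families, because triangle-freeness alone does not give a workable support condition), and verifying the least-eigenvalue bound for all $p\le 1/2$ occupies most of that paper. Without producing $A_p$ and carrying out the eigenvalue estimate, the argument establishes nothing beyond the standard reduction, so as a proof of the theorem it has a genuine and acknowledged gap at its core. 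The concluding claim about the uniqueness of the extremal family for $p<1/2$ likewise requires an analysis of the equality case of the ratio bound (which eigenvectors achieve $\lambda$), which is additional work not sketched here.
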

Note that the Simonovits-S\'{o}s conjecture is the case $p=1/2$ of Theorem~\ref{Thm:EFF}. Ellis et al. also proved several extensions and variants of the theorem, including a version for odd-cycle intersecting families and a $k$-uniform version for any $k = \alpha {{n}\choose{2}}$, $\alpha<1/2$.

\medskip

In the same paper, Ellis et al. conjectured that Theorem~\ref{Thm:EFF} holds in much larger generality.
\begin{conjecture}[\cite{EFF12}]
\label{Conj:EFF}
Let $H$ be a non-$t$-colorable graph. Then for any $H$-intersecting family $\f$ and any $p \leq \frac{2t-1}{2t}$, we have
\[
\mu_p(\f) \leq p^{ {{t+1}\choose{2}}},
\]
and the maximum is attained if and only if $H$ is the complete graph on $t+1$ vertices.
\end{conjecture}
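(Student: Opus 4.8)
The plan is to take for $\f$ the family of all \emph{uniformly dense} graphs. Fix the constants $\eta:=\tfrac12\bigl(p-\tfrac12\bigr)>0$ and $h:=|V(H)|$, set $\delta:=\eta^{h}$, and let
\[
\f\ :=\ \Bigl\{\,G\in\G_n\ :\ e\bigl(G[S]\bigr)\ \ge\ \bigl(\tfrac12+\eta\bigr)\binom{|S|}{2}\ \text{ for every }S\subseteq[n]\text{ with }|S|\ge\delta n\,\Bigr\}.
\]
(For the finitely many $n$ below a threshold $n_{0}(p,H)$, take instead for $\f$ the family of all graphs containing one fixed copy of $H$; this has a positive constant $\mu_{p}$-measure, and enlarging $C$ makes the bound $1-e^{-n^{2}/C}$ vacuous in that range, so assume $n\ge n_{0}$ henceforth.) Note $K_n\in\f$, so $\f\ne\emptyset$.

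The measure estimate is routine concentration. For fixed $S$ with $|S|=m\ge\delta n$ the quantity $e(G[S])$ is distributed as $\mathrm{Bin}\bigl(\binom{m}{2},p\bigr)$, so since $\tfrac12+\eta<p$, Hoeffding's inequality gives $\Pr\bigl[e(G[S])<(\tfrac12+\eta)\binom{m}{2}\bigr]\le e^{-c\binom{m}{2}}$ for some $c=c(p)>0$; summing over the at most $2^{n}$ choices of $S$ yields $\Pr[G(n,p)\notin\f]\le 2^{n}e^{-c\binom{\delta n}{2}}\le e^{-n^{2}/C}$ for a suitable $C=C(p,H)$ and all $n\ge n_{0}$. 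The only content of this step is that the per-set failure probability is $e^{-\Theta(n^{2})}$, which overwhelms the union bound.

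The crux is that $\f$ is $H$-intersecting; I will show that $G_{1}\cap G_{2}\supseteq K_{h}$, and hence $\supseteq H$, for all $G_{1},G_{2}\in\f$, by building a clique greedily. Put $W_{0}:=[n]$, and suppose $v_{1},\dots,v_{i-1}$ (with $i\le h$) have been chosen forming a clique in $G_{1}\cap G_{2}$ whose common $(G_{1}\cap G_{2})$-neighbourhood $W_{i-1}$ satisfies $|W_{i-1}|\ge\delta n$. Applying the defining inequality of $\f$ to $W_{i-1}$ once for $G_{1}$ and once for $G_{2}$, and using $|E(A)\cap E(B)|\ge|E(A)|+|E(B)|-\binom{|W_{i-1}|}{2}$ for graphs $A,B$ on $W_{i-1}$,
\[
e\bigl((G_{1}\cap G_{2})[W_{i-1}]\bigr)\ \ge\ 2\bigl(\tfrac12+\eta\bigr)\binom{|W_{i-1}|}{2}-\binom{|W_{i-1}|}{2}\ =\ 2\eta\binom{|W_{i-1}|}{2},
\]
so $(G_{1}\cap G_{2})[W_{i-1}]$ contains a vertex $v_{i}$ of degree there at least the average, $2\eta(|W_{i-1}|-1)\ge\eta|W_{i-1}|$ (for $n\ge n_0$ the right side is $\ge1$, so $v_i$ exists). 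Appending $v_{i}$ and letting $W_{i}$ be its $(G_{1}\cap G_{2})$-neighbourhood inside $W_{i-1}$ keeps $v_{1},\dots,v_{i}$ a clique and gives $|W_{i}|\ge\eta|W_{i-1}|$, hence $|W_{i}|\ge\eta^{i}n\ge\eta^{h-1}n>\delta n$ for all $i\le h-1$. Thus the process runs for $h$ rounds and produces a $K_{h}$ in $G_{1}\cap G_{2}$; since $H$ has $h$ vertices, $H\subseteq G_{1}\cap G_{2}$.

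The one delicate point is this last constant-balancing: the neighbourhoods $W_{i}$ must survive $h=|V(H)|$ greedy rounds without dropping below the scale $\delta n$ at which the density condition is applied, which forces $\delta$ to be a fixed, exponentially-small-in-$h$ power of $\eta$ — but this costs nothing in the measure bound beyond the value of $C$. Conceptually I expect the main obstacle (and the reason both conjectures fail) to be the realisation that one should \emph{not} try to make $G_{1}\cap G_{2}$ globally dense — which, via Tur\'{a}n-type and Erd\H{o}s--Stone-type estimates, would only reach $p$ above the thresholds appearing in Conjecture~\ref{Conj:EFF} — but should instead exploit \emph{uniform} local density of each $G_{i}$ on all linear-sized vertex sets, a property that descends to intersections and lets a purely greedy argument manufacture the clique for every $p>\tfrac12$.
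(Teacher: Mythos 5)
Your construction is correct and is essentially the paper's argument: the paper builds the same family recursively (a graph is in $\f_{t+1}$ if it is dense and all its large induced subgraphs lie in $\f_t$) and extracts the clique by induction via the neighbourhood of a maximum-degree vertex, which is exactly your uniform-density condition and greedy neighbourhood-shrinking argument unrolled into a single definition. Both yield the same counterexample to Conjecture~\ref{Conj:EFF}, with the measure bound coming from the identical Chernoff-plus-union-bound computation over all $2^n$ vertex subsets.
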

Note that one can easily show, using the classical Tur\'{a}n's theorem, that for $p> \frac{2t-1}{2t}$ there exist $K_{t+1}$-intersecting families of measure $1-o(1)$. Thus, Conjecture~\ref{Conj:EFF} is the strongest result one may hope for.

\medskip

In this note we prove the following result, which disproves Conjecture~\ref{Conj:EFF}.
\begin{proposition}\label{Main}
For any graph $H$ and any $p>\frac{1}{2}$, there exists an $H$-intersecting graph family $\f \subset \G_n$
such that $\mu_{p}\left(\f\right)\ge 1-e^{-n^{2}/C}$, where $C=C\left(p,H\right)>0$.
\end{proposition}
The proof of Proposition~\ref{Main} is rather elementary -- we construct inductively a sequence of families $\{\f_t\}_{t=2,3,\ldots}$, such that each $\f_t$ is $K_t$-intersecting, and show that $\mu_{p}\left(\f_t\right)$ satisfies the assertion of the theorem using classical Chernoff bounds~\cite{AS}. The idea behind the proof is the fact that for any fixed $p>1/2$, any fixed graph $H$, and a sufficiently large $n$, two `generic' random graphs on $n$ vertices of edge density $p$ are $H$-intersecting. Hence, one may expect that if a family $\f$ contains only `sufficiently pseudo-random' graphs then it is $H$-intersecting. We show that one can indeed construct such a family, such that its $\mu_p$ measure will be close to $1$.

\medskip \noindent Note that by the biased EKR theorem mentioned above, for any non-empty $H$ and any $p \leq 1/2$, any $H$-intersecting family $\f$ satisfies $\mu_p(\f) \leq p$ (and in particular, there do not exist $H$-intersecting families of $p$-measure close to 1). Hence, our result implies that the maximal $p$-measure of an $H$-intersecting family exhibits a \emph{sharp threshold} phenomenon at $p=1/2$. It may be interesting to further understand the `threshold window', and in particular, to determine the maximal $p=p(n,H)$ such that for any $H$-intersecting family $\f$, $\mu_p(\f)$ is bounded away from 1 (for some fixed graph $H$).

\section{Proof of Proposition~\ref{Main}}
\label{sec:proof}

We use the following standard consequence of Chernoff's inequality (see~\cite{AS}, Appendix~A).
\begin{proposition}
\label{Prop:Chernoff}
For any $N \geq 1$ and $p'<p<1$, there exists a constant $C=C\left(p,p'\right)>0$ such that the following holds. Let $X\sim\mathrm{Bin}\left(N,p\right)$.
Then
\[
\Pr\left[X\le p'\right]\le e^{-N/C}.
\]
\end{proposition}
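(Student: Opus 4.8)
The plan is to derive the bound by the standard exponential-moment (Chernoff) method; the only hypothesis that is actually used is $p'<p$, which places the threshold strictly below the mean $Np$. I read the displayed inequality in its intended form as the lower-tail estimate $\Pr[X\le p'N]\le e^{-N/C}$, since $X$ takes values in $\{0,1,\dots,N\}$ and it is the fraction-of-trials threshold $p'N$ that makes the statement nontrivial. Writing $X=\sum_{i=1}^{N}X_i$ as a sum of independent $\mathrm{Bernoulli}(p)$ variables, I would bound the lower tail by applying Markov's inequality to the decreasing function $e^{-tX}$.

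Concretely, for every $t>0$, since $e^{-tx}\ge e^{-tp'N}$ exactly when $x\le p'N$,
\[
\Pr[X\le p'N]=\Pr\bigl[e^{-tX}\ge e^{-tp'N}\bigr]\le e^{tp'N}\,\mathbb{E}\bigl[e^{-tX}\bigr]=\bigl(e^{tp'}(1-p+pe^{-t})\bigr)^{N},
\]
where the last equality uses independence and the identity $\mathbb{E}[e^{-tX_i}]=1-p+pe^{-t}$. It therefore suffices to exhibit a single $t_0>0$ for which the base $g(t):=e^{tp'}(1-p+pe^{-t})$ is strictly less than $1$.

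Next I would examine $g$ near the origin: one checks $g(0)=1$ and $g'(0)=p'-p<0$, which is precisely where the hypothesis $p'<p$ enters, so $g$ is strictly decreasing at $0$ and there is some $t_0>0$ with $g(t_0)<1$. Setting $C:=-1/\ln g(t_0)>0$, a constant depending only on $p$ and $p'$ and independent of $N$, the bound above becomes $\Pr[X\le p'N]\le g(t_0)^{N}=e^{-N/C}$, which is the claim. Optimizing over $t$ would sharpen $C$ to $1/D(p'\,\|\,p)$, the reciprocal of the binary relative entropy $D(p'\,\|\,p)=p'\ln(p'/p)+(1-p')\ln\tfrac{1-p'}{1-p}>0$, but any valid $t_0$ already suffices.

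I do not anticipate a genuine obstacle, as this is the textbook Chernoff estimate; the only point requiring care is that $C$ can be chosen uniformly in $N$, which is immediate because $t_0$, and hence $g(t_0)$, depend only on $p$ and $p'$. The one subtlety worth flagging is the role of the hypothesis $p'<p$: it is exactly what forces $g'(0)<0$ and thereby guarantees a witness $t_0$ with $g(t_0)<1$; without it the exponent would fail to be strictly negative and the bound would collapse.
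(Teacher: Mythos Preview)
Your argument is correct: the exponential-moment method you outline is the standard derivation of the lower-tail Chernoff bound, and your observation that the displayed threshold should be read as $p'N$ rather than $p'$ is the right interpretation given how the proposition is applied later in the paper.

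As for comparison with the paper: there is nothing to compare. The paper does not give a proof of this proposition at all; it simply states the bound as a ``standard consequence of Chernoff's inequality'' and refers the reader to Appendix~A of Alon and Spencer. Your write-up is essentially the textbook argument one would find there, so in effect you have supplied the proof the authors chose to omit.
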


\begin{proof}[Proof of Proposition~\ref{Main}]
It is clearly sufficient to prove the proposition for all complete graphs $H = K_{t}$, $t \in \mathbb{N}$. We prove the proposition by induction on $t$, by constructing (for each $t$) a $K_t$-intersecting family $\f_t^n \subset \G_n$ that satisfies the assertion of the proposition.
Recall that for any family $\f \subset \G_n$, and for any $p$, $\mu_p(\f)$ is the probability that a random graph $G \sim G(n,p)$ belongs to $\f$.

\medskip \noindent For $t=2$ and for any $n \in \mathbb{N}$, we define $\f_{2}^n \subset \G_n$ as the family of all graphs that contain more than half of the ${{n}\choose{2}}$ possible edges. $\f_2$ is clearly $K_{2}$-intersecting, and by Proposition~\ref{Prop:Chernoff}, we have
\[
\mu_{p}\left(\f_2^n\right)\ge 1- e^{-n^{2}/C},
\]
where $C=C\left(p\right)$, as asserted. (Note that the number of edges in $G \sim G(n,p)$ has distribution $\mathrm{Bin}\left(\binom{n}{2},p\right)$, and thus we indeed can apply Proposition~\ref{Prop:Chernoff} to bound $\Pr_{G \sim G(n,p)} [G \in \f_2^n]$.)

\medskip \noindent Suppose that we already defined $K_{t}$-intersecting families $\f_{t}^m \subset \G_m$ (for all $m \in \mathbb{N}$) such that $\mu_{p}\left(\f_{t}^m\right) \ge 1-e^{-m^{2}/C\left(p,t\right)}$. For any $n \in \mathbb{N}$, we define $\f_{t+1}^n \subset \G_n$ to be the family of all graphs $G$ such that:
\begin{enumerate}
\item $G$ has at least $\frac{p+0.5}{2}\binom{n}{2}$ edges.
\item For every subset $S\subseteq\left[n\right]$ with $|S| \geq \left(p-\frac{1}{2}\right)(n-1)$, the induced sub-graph of $G$ on the vertex set $S$ (denoted by $G|_S$) belongs to $\f_{t}^{|S|}$.
\end{enumerate}
Let $G \sim G\left(n,p\right)$. By Proposition~\ref{Prop:Chernoff}, we have
\[
\Pr[G \mbox{ satisfies (1) }] \geq 1-e^{-n^{2}/C\left(p\right)}.
\]
In addition, for any fixed $S \subset [n]$ with $|S| \geq \left(p-0.5\right)(n-1)$, we have
\[
\Pr \left[ G \big|_S \in \f_t^{|S|} \right] \geq 1- e^{-\left(p-0.5\right)^{2}(n-1)^{2}/C\left(p,t\right)}
\]
by the induction hypothesis. Hence, a union bound implies
\begin{align*}
\mu_{p}\left(\f_{t+1}^n\right) \geq 1 - e^{-n^{2}/C\left(p\right)} - \sum_{S\subseteq\left[n\right]}e^{-\left(p-0.5\right)^{2}(n-1)^{2}/C\left(p,t\right)} \geq 1-e^{-n^{2}/C\left(t+1,p\right)}.
\end{align*}
We assert that $\f_{t+1}^n$ is $K_{t+1}$-intersecting. To prove this, let $G_1, G_2 \in\f_{t+1}^{\left(n\right)}$, and let $G_0 = G_1\cap G_2$. We show that $G_0$ contains a copy of $K_{t+1}$.

Let $v$ be a vertex of maximal degree in $G_0$. As $|E(G_0)| \ge |E\left(G_1\right)|+|E\left(G_2\right)|-\binom{n}{2}\ge\left(p-0.5\right)\binom{n}{2}$, we have $\deg\left(v\right) \ge \left(p-0.5\right)(n-1)$. Let $T$ be the set of neighbors of $v$ in $G_0$, and note that $|T| \geq \left(p-0.5\right)(n-1)$. It is clearly sufficient to show that the induced sub-graph $(G_0)|_T$ contains a copy of $K_t$. Consider the induced sub-graphs $(G_1)|_T, (G_2)|_T,$ and $(G_0)_T$. By assumption, we have $(G_1)|_T, (G_2)_T \in \f_t^{|T|}$. Since $\f_t^{|T|}$ is $K_t$-intersecting, this implies that $(G_0)_T = (G_1)|_T \cap (G_2)|_T$ contains a copy of $K_t$. This completes the proof.
\end{proof}


\end{document}